\def\dist{\ensuremath{\text{dist}}}
\def\const{\ensuremath{\text{const}}}
\newtheorem{theorem}{Theorem}[section]
\newtheorem{proposition}[theorem]{Proposition}
\newtheorem{lemma}[theorem]{Lemma}
\newtheorem{maintheorem}{Theorem}
\newcommand{\cmt}{\begin{maintheorem}}
\newcommand{\fmt}{\end{maintheorem}}
\newtheorem{maincorollary}[maintheorem]{Corollary}
\newcommand{\cmc}{\begin{maincorollary}}
\newcommand{\fmc}{\end{maincorollary}}
\theoremstyle{remark}
\newtheorem{definition}{\bf Definition}
\begin{document}

\title[Statistical stability of Geometric Lorenz attractors]{Statistical stability of Geometric Lorenz attractors}

\author{Jos\'e F. Alves}
\address{Jos\'e F. Alves\\ Departamento de Matem\'atica, Faculdade de Ci\^encias da Universidade do Porto\\
Rua do Campo Alegre 687, 4169-007 Porto, Portugal}
\email{jfalves@fc.up.pt} \urladdr{http://www.fc.up.pt/cmup/jfalves}
\author{Mohammad Soufi}
\address{Mohammad Soufi\\ Departamento de Matem\'atica, Faculdade de Ci\^encias da Universidade do Porto\\
Rua do Campo Alegre 687, 4169-007 Porto, Portugal}
\email{msoufin@gmail.com}


\thanks{The authors were partially supported by Funda\c c\~ao Calouste Gulbenkian, by CMUP, by the European Regional Development Fund through the Programme COMPETE and by  FCT under the projects PTDC/MAT/099493/2008 and PEst-C/MAT/UI0144/2011.}

\subjclass[2000]{37C10, 37C40, 37D45}

\keywords{Lorenz attractor, Lorenz map, Poincar\'e section, SRB measure, Statistical stability}

\begin{abstract}
We consider the robust family of geometric Lorenz attractors. These attractors are chaotic, in the sense that they are transitive and have sensitive dependence on the initial conditions. Moreover, they support  SRB measures whose ergodic basins cover a full Lebesgue measure subset of points in the topological basin of attraction. Here we prove that the SRB measures depend continuously on the dynamics in the weak$^\ast$  topology.    
\end{abstract}

\maketitle

\tableofcontents

\section{Introduction}

The theory of Dynamical Systems initiated by Poincar\'e's work on the three-body problem of celestial mechanics and it studies processes which are evolving in time. The description of the processes is given in terms of flows when the time is continuous or iterations of maps when the time is discrete. An orbit is a time-order collection of states of the system starting from a specific state applying the flow or the map. The main goals of this theory are to describe the typical behavior of orbits as time goes to infinity, and to understand how this behavior changes when we perturb the system or to which extent it is stable. In this work we are concerned with the stability of a system.

Ergodic Theory deals with measure preserving processes in a measure space. One in particular tries to describe the average time spent by typical orbits in different regions of the phase space. According to Birkoff's Ergodic Theorem, such times are well defined for almost all points, with respect to any invariant probability measure. However, the notion of typical orbit is usually meant in the sense of volume (Lebesgue measure), which is not always an invariant measure. It is a fundamental open problem to understand under which conditions the behavior of typical (with respect to Lebesgue measure) orbits   is well defined from the statistical point of view. This problem can be precisely formulated by means of \emph{Sinai-Ruelle-Bowen (SRB) measures} which were introduced by Sinai for Anosov diffeomorphisms \cite{sinai} and later extended by Ruelle and Bowen for Axiom A diffeomorphisms and flows~\cite{ruelle1,ruelle2}.

\begin{definition}[SRB measures]
Let $\mu$ be an invariant Borel probability measure for a flow~$(X^t)_t$ on the Borel sets of a manifold $M$.  The \emph{basin} of $\mu$ is the set of points $x\in M$ such that
\begin{equation}\label{de.srb}
\lim\limits_{T\to+\infty}\frac 1T\int_0^T\varphi(X^t(x))~dt=\int\varphi~d\mu,\quad\mbox{for any continuous }\varphi:M\to\mathbb{R}.
\end{equation}
The measure $\mu$ is called an \emph{SRB measure} if its basin has positive Lebesgue measure. 
\end{definition}

The notions of basin and SRB measure can easily be extended to discrete time dynamical systems, simply by replacing the integral by a time series in~\eqref{de.srb}.

A fairly good description of the statistical behavior of orbits can be given by an SRB measure in the sense that, for a ``big'' (meaning positive volume) set of points, the time averages of a physical observable (a continuous function on the manifold) of the system is accomplished simply by integrating the observable with respect to SRB measure (space average).

In trying to capture the persistence of the statistical properties of a dynamical system, Alves and Viana \cite{alves4} proposed a notion, called \textit{statistical stability}, which expresses the continuous variation of SRB measures as a function of the dynamical system. This is a kind of stability in the sense that the outcome of evaluating continuous functions along orbits does not change much under small perturbations of the system. This is what we might be observed in computer experiments, where typically the picture obtained by plotting an orbit seems to be independent of the starting point and truncation errors. 

Next we introduce the notion of statistical stability for vector fields. 

\begin{definition}[Statistical stability]
Assume we have a family $\mathcal X$ of vector fields endowed with a topology, admitting a common trapping region $U$ on which each $X\in \mathcal X$ has a unique SRB measure $\mu_X$. We say that $\mathcal X$ is \emph{statistically stable} (in $U$) if  the map $\mathcal X\ni X \longmapsto\mu_X$ is continuous, where in the space of probability measures we consider the weak$^\ast$ topology.
\end{definition}

Our goal in this work is to prove the statistical stability of a family of vector fields associated to the Lorenz equations.

\subsection{Lorenz equations} Lorenz \cite{lorenz} studied numerically a vector field $X$ defined by the system of equations
$$\left\{\begin{array}{l}
\dot{x}=a(y-x),\\
\dot{y}=bx-y-xz,\\
\dot{z}=xy-cz,
\end{array}
\right.$$
for the parameters $a=10$, $b=28$ and $c=8/3$. The following properties are well known for this vector field: \begin{enumerate}
\item $X$ has a \textit{singularity} at origin which $DX(0)$ has real eigenvalues $$0<-\lambda_3\approx 2.6<\lambda_1\approx 11.83<-\lambda_2\approx 22.83;$$ 
\item  
there is an open set $U$, \textit{trapping region}, such that $X^t(\bar{U})\subseteq U$ for all $t>0$.  The maximal invariant set in $U$, $\Lambda=\cap_{t>0}X^t(U)$, is an attractor and the origin is the only singularity contained in $U$;
\item  
the divergence of $X$ is negative:
$$
\mbox{div} X=\partial\dot{x}/dx+\partial\dot{y}/dy+\partial\dot{z}/dz=-(a+1+c)<0.
$$
By Liouville's Formula, the flow of $X$ contracts volume. Thus, $\Lambda$ has zero volume.
\end{enumerate}

Lorenz found with his experimental computations that the flow is sensitive with respect to the initial conditions near the attractor, i.e. even a small initial error lead to enormous differences in the outcome. It was a challenging problem to give a rigorous mathematical proof for this experimental evidence. Tucker \cite{tucker} gave a computer assisted proof that the original Lorenz system indeed corresponds to a robustly transitive non-hyperbolic attractor containing a singularity. Moreover, he proved that the Lorenz equations define a dynamical system with the behavior of the geometric model introduced by Guckenheimer and Williams \cite{williams} that we describe next.

\subsection{Geometric model}\label{section2}
Here we briefly describe the geometric model of the Lorenz attractor; see e.g. \cite{araujo2} for more details. The model is given by a vector filed $X_0$ which is linear in a neighborhood of the origin. The real eigenvalues $\lambda_1, \lambda_2$ and $\lambda_3$ of $DX_0(0)$ with the eigenvectors along the coordinate axis 
satisfy
$ 
0<-\lambda_3<\lambda_1<-\lambda_2.
$ 
\begin{figure}[h]
\begin{center}
\vspace{-.3cm}
\includegraphics[height=12cm, angle=-90]{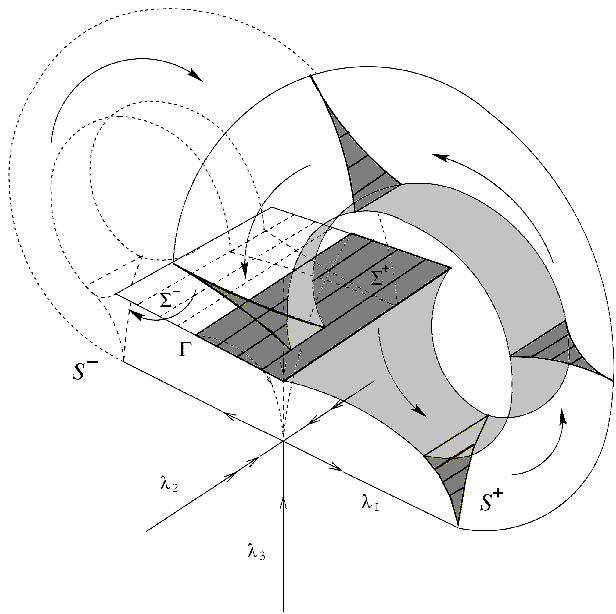}
\vspace{-5mm}
\caption{Geometric Lorenz flow}\label{fig:2}
\end{center}
\end{figure}
We consider the square  
given by $$\Sigma=\left\{(x,y,1): -\frac 12\leq x,y \leq\frac 12\right\},$$
and let $\Gamma$ be the intersection of $\Sigma$ with the local stable manifold of the singularity. 
The segment $\Gamma$ divides $\Sigma$ in to two parts
$$
\Sigma^+=\{(x,y,1)\in\Sigma: x>0\}\quad\text{and}\quad\Sigma^-=\{(x,y,1)\in\Sigma: x<0\},
$$
The images of $\Sigma^{\pm}$ by this map are curveline triangles $S^{\pm}$ without the vertexes $(\pm 1,0,0)$ and every line segment in $\mathcal{F}=\{x=\const\cap\Sigma\}$ except $\Gamma$ is mapped to a segment in $\{z=\const\cap S^{\pm}\}$.
The time $\tau$  which takes for each $(x,y,1)\in\Sigma\setminus\Gamma$ to reach $S^{\pm}$ is given by $\tau(x,y,1)=-\frac{1}{\lambda_1}\log|x|.$
Now we suppose that the flow takes the triangles back to the $\Sigma$ in a smooth way as it is shown in Figure \ref{fig:2}.
The resulting Poincar\'e map form $\Sigma\setminus\Gamma$ into $\Sigma$ has the form
\begin{equation}\label{eq.mapg}
P(x,y)=(f(x),g(x,y)),
\end{equation}
for some $f:I\setminus\{0\}\to I$ and $g:I\setminus\{0\}\times I\to I$, where $I=[-\frac 12, \frac 12]$.
The one-dimensional map $f$ is as described in Figure~\ref{fig:3} and satisfies:
\begin{enumerate}
\item $f$ has a discontinuity at $x=0$ with side limits $f(0^+)=-1/2$ and $f(0^-)=1/2$;
\item $f$ is differentiable on $I\setminus\{0\}$ and there is $c>1$ such that $f'(x)\ge c$ for all $x\in I\setminus\{0\}$;
\item the limit of $f'(x)$ is infinity as $x$ approaches $0^\pm$; 
\item $f''(x)>0$ for $x\in[-\frac 12,0)$ and $f''(x)<0$ for $x\in(0,\frac 12]$;
\item $f$ is transitive.
\end{enumerate}
\begin{figure}[h]
\centering
\includegraphics[width=6cm,angle=-90]{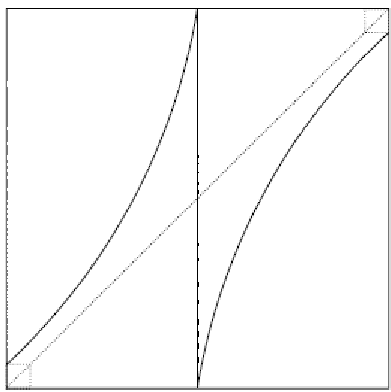}
\caption{Lorenz map}\label{fig:3}
\end{figure}

%

%

The map $g$ in equation \eqref{eq.mapg} is defined in such a way that  the stable foliation $\mathcal F$ is uniformly contracting: there exist constants $C' > 0$ and $0 < \rho < 1$ such that for any given leaf $\gamma$ of the foliation and $\xi_1, \xi_2\in\gamma$ and $n\geq 1$,
$$\dist(P^n(\xi_1),P^n(\xi_2))\leq C'\rho^n \dist(\xi_1,\xi_2).$$

\subsection{Statement of results}\label{subsection}

A crucial fact about the geometric Lorenz attractor is that it is robust, i.e. vector fields sufficiently close in the $C^1$ topology to the original one constructed as above  also have strange attractors. Indeed, there exist an open neighborhood $U$ in $\mathbb{R}^3$ containing the geometric Lorenz attractor $\Lambda$ and an open neighborhood $\mathcal{U}$ of $X_0$ in $C^1$ topology such that for all vector fields $X\in\mathcal{U}$, the maximal invariant set $\Lambda_X=\cap_{t\geq 0}X^t(U)$ is a transitive set which is invariant under the flow of $X$. This is a consequence of the persistence of an invariant contracting foliation $\mathcal{F}_X$ on the cross section $\Sigma$ for $X\in\mathcal{U}$; see \cite[Theorem 3.10]{araujo2}. 

Under some conditions on the eigenvalues of the singularity, for a $C^2$-close vector field $X$ to $X_0$, the leaves of $\mathcal{F}_X$ are $C^2$ close to those of $\mathcal{F}$ and it follows that $f_X$ is $C^2$ close to~$f$; see \cite{R81,R84}. Thus, there exits $o\in[-\frac 12,\frac 12]$ which play for $f_X$ the same role of 0 for $f$, and the properties of $f$ in Subsection \ref{section2} are still valid for $f_X$ on a subinterval $[-b,b]$, for some $0<b<\frac 12$ close to $\frac 12$. 

\begin{definition}\label{de.lorenzflows} We define  the family $\mathcal{X}$ of \emph{Geometric Lorenz vector fields }as a $C^2$ neighborhood of $X_0$  with the following properties:
\begin{enumerate}
\item for each $X\in \mathcal X$, the maximal forward invariant set $\Lambda_X$ inside $U$  is an attractor containing a hyperbolic singularity;
\item for each $X\in \mathcal X$, $\Sigma$ is a cross-section for the flow with a return time $\tau_X$ and a Poincar\'e map $P_X$;
\item for each $X\in\mathcal X$, the map $P_X$ admits a $C^2$ uniformly contracting  invariant foliation $\mathcal{F}_X$ on $\Sigma$ with projection along the leaves of $\mathcal F_X$ onto $I$ given by a map $\pi_X$;

\item for each $X\in\mathcal X$, the map $f_X$ on the quotient space $I$ by the leaves in $\mathcal F_X$ is a transitive $C^2$ piecewise  expanding  with two branches; moreover, there is $c>1$ such that $f'_X(x)\ge c$ except at the  discontinuity point $O_X$ and $\lim_{x\to O_X^\pm}f'_X(x)=+\infty$.

\item there is some constant $C>0$ such that for each $X\in\mathcal X$
\begin{equation}\label{eq.retime}
\tau_X(\xi)\le -C\log|\pi_X(\xi)-O_X|.
\end{equation}  
\end{enumerate}
\end{definition}

Observe that as the length of $I$ is equal to one, then $|\pi_X(\xi)-O_X|<1$ for all $X\in \mathcal X$. For a detailed exposition on the properties of geometric Lorenz flows see e.g. \cite[Section~2.3]{araujo2}; see also \cite[Equation (9)]{AV12} for the last property. The main goal of this work is to prove the following result.

\begin{maintheorem}\label{main}
Geometric Lorenz vector fields are statistically stable.
\end{maintheorem}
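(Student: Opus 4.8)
The plan is to prove statistical stability in three stages, following the natural reduction of the flow dynamics to its Poincar\'e return map and then to the one-dimensional quotient map. Write $\nu_X$ for the (unique, physical) SRB measure of the Poincar\'e map $P_X$ on $\Sigma$, and $\bar\nu_X$ for its projection under $\pi_X$ to the quotient interval $I$, which is the unique absolutely continuous invariant probability measure (ACIP) of the piecewise expanding map $f_X$. The SRB measure $\mu_X$ of the flow is recovered from $\nu_X$ by saturating along orbits and normalising by the mean return time: for every continuous $\varphi\colon M\to\mathbb R$,
\begin{equation}\label{eq.susp}
\int\varphi\,d\mu_X=\frac{1}{\int_\Sigma\tau_X\,d\nu_X}\int_\Sigma\int_0^{\tau_X(\xi)}\varphi\bigl(X^t(\xi)\bigr)\,dt\,d\nu_X(\xi).
\end{equation}
Consequently it suffices to prove: (i) that $X\mapsto\bar\nu_X$ is weak$^\ast$ continuous; (ii) that this lifts to weak$^\ast$ continuity of $X\mapsto\nu_X$; and (iii) that the suspension formula \eqref{eq.susp} is continuous in $X$, which additionally requires continuity of the mean return time $\int_\Sigma\tau_X\,d\nu_X$.

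The heart of the argument is stage (i), the one-dimensional core. Since each $f_X$ is $C^2$, piecewise expanding with $f_X'\ge c>1$ and two monotone branches, its transfer operator $\mathcal L_X$ acting on functions of bounded variation satisfies a Lasota--Yorke inequality; the key step is to make this \emph{uniform} over the family, obtaining constants $A,B>0$ and $0<\sigma<1$, independent of $X\in\mathcal X$, with $\mathrm{Var}(\mathcal L_X^n h)\le A\sigma^n\,\mathrm{Var}(h)+B\|h\|_1$ for all $n\ge1$. This yields a uniform bound on the bounded variation norm of the invariant densities $h_X=d\bar\nu_X/d\mathrm{Leb}$, hence a uniform $L^\infty$ bound, since functions of bounded variation on $I$ are bounded. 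Given $X_k\to X$ in $C^2$, Helly's selection theorem extracts an $L^1$-convergent subsequence $h_{X_k}\to h$; I would then verify that $\mathcal L_{X_k}\phi\to\mathcal L_X\phi$ in $L^1$ for smooth $\phi$, using the $C^2$ convergence $f_{X_k}\to f_X$ and the convergence of the discontinuity points $O_{X_k}\to O_X$, to conclude that the limit density $h$ is $f_X$-invariant. Uniqueness of the ACIP (from transitivity and the spectral gap of $\mathcal L_X$) forces $h=h_X$, and since every subsequence yields this same limit, the whole family converges, giving (i).

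For stage (ii) I would exploit the uniform contraction of the invariant foliation $\mathcal F_X$: the measure $\nu_X$ disintegrates along the leaves, and because $\dist(P_X^n\xi_1,P_X^n\xi_2)\to0$ exponentially along each leaf, $\nu_X$ is uniquely determined by its projection $\bar\nu_X$ together with the holonomy of $\mathcal F_X$. Weak$^\ast$ continuity of $X\mapsto\nu_X$ then follows from stage (i) combined with the $C^2$ persistence and continuous variation of $\mathcal F_X$ and of the projection $\pi_X$ guaranteed by the standing hypotheses. Stage (iii) plugs the continuity of $\nu_X$ into \eqref{eq.susp}; the flow map $(t,\xi)\mapsto X^t(\xi)$ varies continuously, so the only delicate point is the unbounded return time $\tau_X$. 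Here the logarithmic bound $\tau_X(\xi)\le -C\log|\pi_X(\xi)-O_X|$ from \eqref{eq.retime}, together with the uniform $L^\infty$ bound on the densities $h_X$, gives a uniform bound on $\int_I|\log|x-O_X||^{1+\delta}\,d\bar\nu_X$ for some $\delta>0$; this uniform integrability upgrades weak$^\ast$ convergence of $\nu_X$ to convergence of $\int_\Sigma\tau_X\,d\nu_X$ and of the saturated integrals in \eqref{eq.susp}, completing the proof.

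I expect stage (i), and specifically the uniformity of the Lasota--Yorke estimate, to be the main obstacle. The difficulty is twofold: the discontinuity point $O_X$ moves with $X$, and the derivative $f_X'$ blows up as $x\to O_X^\pm$, so the constants in the Lasota--Yorke inequality must be controlled uniformly near a moving singularity. Establishing that the distortion and the variation of $1/f_X'$ are uniformly bounded over the whole $C^2$ neighbourhood $\mathcal X$ --- and thereby that the spectral gap of $\mathcal L_X$ does not degenerate --- is the technical core on which all three stages rest.
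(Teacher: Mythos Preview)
Your three-stage decomposition matches the paper's architecture exactly, and your treatment of stage~(iii) via uniform integrability of the return time (using the logarithmic bound~\eqref{eq.retime} and the uniform $L^\infty$ bound on the densities) is essentially the paper's argument, which implements it by truncation $\tau_N=\min\{\tau,N\}$ rather than via a $(1+\delta)$-moment, but to the same effect.

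Two substantive differences deserve comment. First, you flag stage~(i) as the technical core and propose to establish the uniform Lasota--Yorke inequality and spectral stability from scratch. The paper does not do this: it simply quotes Keller and the uniform version of the Lasota--Yorke bound (Proposition~\ref{pr.keller}) as known facts, obtaining both $L^1$-continuity of $X\mapsto d\bar\mu_X/d\lambda$ and a uniform bound $d\bar\mu_X/d\lambda<M$ for free. Your proposed argument for (i) is correct but is not where the paper invests its effort.

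Second, and more importantly, your stage~(ii) is where your proposal diverges from the paper and is also where it is weakest. You write that $\nu_X$ ``disintegrates along the leaves'' and is ``uniquely determined by its projection $\bar\nu_X$ together with the holonomy of $\mathcal F_X$'', and that weak$^\ast$ continuity then follows from the continuous variation of the foliation. This is not a proof: the conditional measures on stable leaves are not given by any holonomy data, and you give no mechanism for showing they vary continuously. The paper avoids disintegration entirely. It uses the characterisation $\int\phi\,d\tilde\mu_X=\lim_{m\to\infty}\int(\phi\circ P_X^m)^+\,d\bar\mu_X$ (Lemma following~\eqref{EQ.+-}) and proves two things: that for each fixed $m$ one has $\int(\phi\circ P_n^m)^+\,d\bar\mu_n\to\int(\phi\circ P^m)^+\,d\bar\mu$ (Lemma~\ref{le.limi}, using the $L^1$-convergence of densities from stage~(i) and a direct Lebesgue-measure estimate on the discontinuity set of $P^m$), and that the convergence in $m$ is \emph{uniform in $n$}, because the uniform contraction rate of the stable foliation makes the sequence $\bigl(\int(\phi\circ P_n^m)^+\,d\bar\mu_n\bigr)_m$ uniformly Cauchy (Proposition~\ref{pro1}). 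The two limits then commute. This uniform-Cauchy/limit-interchange argument is the paper's actual contribution at stage~(ii); your disintegration sketch would need substantial work to reach the same conclusion.
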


\section{Preliminaries}

Consider the family $\mathcal X$ of Geometric Lorenz vector fields as in Definition~\ref{de.lorenzflows}.
We assume that for each $X\in\mathcal X$ the derivative $f'_X$ is monotonic on each branch. On the other hand, $1/f'_X$ is bounded because $f'_X>1$. Therefore $1/f'_X$ is monotonic and bounded and hence is of bounded variation.  It follows from \cite[Corollary 3.4]{viana} that each  $f_X$ admits a unique ergodic invariant probability $\bar{\mu}_X$ which is absolutely continuous with respect to Lebesgue measure $\lambda$, whose density  $d\bar{\mu}_X/d\lambda$ is a bounded variation function and, in particular, it is bounded.  

We point out that statistical stability results for piecewise expanding maps have been obtained in \cite{keller}.
According to \cite[Corollary 14.]{keller} or \cite[Theorem 11.2.2.]{boyarsky}, the family $f_X$ with $X\in\mathcal X$ is in the conditions of the results by Keller. Moreover, the density $d\bar\mu_X/d\lambda$ can be obtained by means of the Lasota-Yorke inequality
whose constants can be taken the same for all Lorenz maps; see  \cite[Proposition 3.1.]{viana}.  Therefore the density functions $d\bar{\mu}_X/d\lambda$ are uniformly bounded \cite[Corollary 3.4]{viana}. Hence we have:

\begin{proposition}\label{pr.keller}
Each $f_X$ with $X\in\mathcal{X}$ is strongly statistical stable, i.e. $f_X\longmapsto d\bar{\mu}_X/d\lambda$ is continuous with respect to $L^1$-norm in the space of densities. Moreover, there exists $M>0$ such that for all $X\in\mathcal{X}$
we have $
d\bar{\mu}_X/d\lambda<M.
$
\end{proposition}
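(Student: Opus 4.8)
The plan is to derive both assertions from the transfer-operator formalism, leaning on the uniform Lasota--Yorke inequality for the family $\{f_X\}$ recorded in \cite[Proposition 3.1]{viana}. Write $L_X$ for the Perron--Frobenius operator of $f_X$ acting on the space $BV(I)$ of bounded variation functions, normalized so that the density $h_X:=d\bar\mu_X/d\lambda$ is the fixed point $L_X h_X=h_X$ with $\|h_X\|_1=1$. The content of the uniform inequality is that there exist constants $A>0$, $B>0$ and $0<\alpha<1$, \emph{independent of} $X\in\mathcal X$, such that
$$
\mathrm{Var}(L_X^n h)\le A\,\alpha^n\,\mathrm{Var}(h)+B\,\|h\|_1,\qquad n\ge 1,\ h\in BV(I).
$$

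First I would establish the uniform bound $M$. Applying the inequality to $h=h_X$ and using $L_X^n h_X=h_X$ together with $\|h_X\|_1=1$ gives $\mathrm{Var}(h_X)\le A\alpha^n\mathrm{Var}(h_X)+B$ for every $n$; letting $n\to\infty$ yields $\mathrm{Var}(h_X)\le B$ for all $X\in\mathcal X$. Since $I$ has length one, every probability density $h$ satisfies $\|h\|_\infty\le\|h\|_1+\mathrm{Var}(h)$, and hence $\|h_X\|_\infty\le 1+B=:M$, uniformly in $X$. This is the second claim of the proposition.

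The continuity statement I would obtain from Keller's stability theorem \cite[Corollary 14]{keller}. The hypotheses of that result are exactly (i) the uniform Lasota--Yorke inequality above, which provides a uniform spectral gap for the $L_X$ on $BV(I)$, and (ii) convergence of the transfer operators in the weak operator norm $\|\cdot\|_{BV\to L^1}$ whenever $X_n\to X$ in $\mathcal X$. Granting (ii), Keller's theorem asserts that the spectral projections onto the leading ($=1$) eigenspace vary continuously in the $\|\cdot\|_{BV\to L^1}$-norm; since each such projection has one-dimensional range spanned by $h_X$ under the normalization $\|h_X\|_1=1$, this is precisely the $L^1$-continuity $f_{X_n}\mapsto h_{X_n}\to h_X$, i.e. strong statistical stability.

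The hard part will be verifying hypothesis (ii): that $X_n\to X$ forces $\|L_{X_n}-L_X\|_{BV\to L^1}\to 0$. The difficulty is that the maps $f_X$ are only piecewise smooth, their discontinuity points $O_X$ move with $X$, and the derivatives blow up there, so $f_{X_n}$ need not be $C^0$-close to $f_X$ near $O_X$. The point is that closeness is required only in the weak norm, where one tests against densities: since the branches of $f_{X_n}$ are $C^2$-close to those of $f_X$ on compact subintervals away from the discontinuity, and since $1/f'_X$ is bounded while the discontinuity points converge, the contribution of the thin region between $O_{X_n}$ and $O_X$ is controlled in $L^1$ and vanishes in the limit. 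This is exactly the setting covered by \cite[Corollary 14]{keller} (equivalently \cite[Theorem 11.2.2]{boyarsky}), which is why, as already noted, the family $\{f_X\}$ falls within Keller's framework.
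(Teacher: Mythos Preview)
Your proposal is correct and follows essentially the same approach as the paper: both derive the uniform density bound from the uniform Lasota--Yorke inequality of \cite[Proposition~3.1]{viana} (the paper simply cites \cite[Corollary~3.4]{viana} for this step, which is exactly the computation you wrote out), and both obtain the $L^1$-continuity by invoking Keller's stability result \cite[Corollary~14]{keller} / \cite[Theorem~11.2.2]{boyarsky}. The paper's justification is in fact terser than yours---it records only the citations in the paragraph preceding the proposition---so your added discussion of the $\|\cdot\|_{BV\to L^1}$ hypothesis is a welcome elaboration rather than a departure.
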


For any bounded function $\phi:\Sigma\to\mathbb{R}$, we define $\phi^\pm:I\to\mathbb{R}$ by
\begin{equation}\label{EQ.+-}
\phi^+(x)=\sup\limits_{\xi\in\pi_X^{-1}(x)}\phi(\xi)\quad\text{and}\quad \phi^-(x)=\inf\limits_{\xi\in\pi_X^{-1}(x)}\phi(\xi)\
\end{equation}
where $\pi_X:\Sigma\to I$ is the canonical projection by stable leaves. The next result is proved in \cite[Corollary 6.2]{araujo1}\label{cor1}. 

\begin{lemma}
There is a unique   $P_X$-invariant ergodic probability measure $\tilde{\mu}_X$ on $\Sigma$ such that for every continuous function $\phi:\Sigma\to\mathbb{R}$,
$$
\int\phi~d\tilde{\mu}_X=\lim\limits_{n\to\infty}\int{(\phi\circ P^n_X)^-d\bar{\mu}_X}= \lim\limits_{n\to\infty}\int{(\phi\circ P^n_X)^+d\bar{\mu}_X}.
$$
\end{lemma}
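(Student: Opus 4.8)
The plan is to build $\tilde{\mu}_X$ directly from the prescribed formula, exploiting the two structural facts available: the semiconjugacy $\pi_X\circ P_X=f_X\circ\pi_X$, which forces $P_X$ to carry each leaf $\pi_X^{-1}(x)$ into the single leaf $\pi_X^{-1}(f_X(x))$, and the uniform contraction $\dist(P_X^n(\xi_1),P_X^n(\xi_2))\le C'\rho^n\dist(\xi_1,\xi_2)$ along leaves. Fix a continuous $\phi:\Sigma\to\mathbb{R}$ and set $a_n=\int(\phi\circ P_X^n)^-\,d\bar{\mu}_X$ and $b_n=\int(\phi\circ P_X^n)^+\,d\bar{\mu}_X$. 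First I would show that $(a_n)$ is non-decreasing and $(b_n)$ non-increasing. Since $P_X$ maps the leaf over $x$ into a subset of the leaf over $f_X(x)$, taking the infimum over the smaller set yields the pointwise bound $(\phi\circ P_X^{n+1})^-\ge\big((\phi\circ P_X^n)^-\big)\circ f_X$, and dually $(\phi\circ P_X^{n+1})^+\le\big((\phi\circ P_X^n)^+\big)\circ f_X$; integrating against the $f_X$-invariant $\bar{\mu}_X$ and using $\int h\circ f_X\,d\bar{\mu}_X=\int h\,d\bar{\mu}_X$ gives $a_n\le a_{n+1}$ and $b_{n+1}\le b_n$. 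As both sequences are bounded by $\sup|\phi|$, they converge.

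Next I would show the two limits agree. For each $x$ one has $(\phi\circ P_X^n)^+(x)-(\phi\circ P_X^n)^-(x)=\sup_{\xi,\xi'\in\pi_X^{-1}(x)}\big(\phi(P_X^n\xi)-\phi(P_X^n\xi')\big)$, and since $\xi,\xi'$ lie on a common leaf, $\dist(P_X^n\xi,P_X^n\xi')\le C'\rho^n\,\mathrm{diam}\,\Sigma$. Uniform continuity of $\phi$ on the compact set $\Sigma$ then makes $(\phi\circ P_X^n)^+-(\phi\circ P_X^n)^-$ tend to $0$ uniformly, so $b_n-a_n\to 0$ and $\lim a_n=\lim b_n=:L(\phi)$. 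This already yields the displayed equality once $\tilde{\mu}_X$ is produced.

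It remains to recognize $L$ as integration against a measure. I would verify that $L:C(\Sigma)\to\mathbb{R}$ is a normalized positive linear functional: positivity and $L(1)=1$ are immediate; additivity follows by squeezing, applying the elementary bounds $\chi^-+\eta^-\le(\chi+\eta)^-$ and $(\chi+\eta)^+\le\chi^++\eta^+$ to $\chi=\phi\circ P_X^n$, $\eta=\psi\circ P_X^n$ and letting $n\to\infty$ (both the inner and outer limits equal $L(\phi)+L(\psi)$); homogeneity uses $(c\phi)^-=c\,\phi^+$ for $c<0$ together with $\lim a_n=\lim b_n$. The Riesz representation theorem then furnishes a unique Borel probability $\tilde{\mu}_X$ with $\int\phi\,d\tilde{\mu}_X=L(\phi)$, which is precisely the asserted uniqueness, since a measure is determined by its integrals against continuous functions. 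Invariance comes for free: $L(\phi\circ P_X)=\lim_n a_{n+1}=L(\phi)$, so $\int\phi\circ P_X\,d\tilde{\mu}_X=\int\phi\,d\tilde{\mu}_X$. A short computation with $\phi=\psi\circ\pi_X$, using that $\psi\circ\pi_X\circ P_X^n=\psi\circ f_X^n\circ\pi_X$ is constant on leaves, also shows $(\pi_X)_*\tilde{\mu}_X=\bar{\mu}_X$.

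The main obstacle is ergodicity. The guiding idea is that $(\Sigma,P_X,\tilde{\mu}_X)$ is a measurable extension of the ergodic base $(I,f_X,\bar{\mu}_X)$ whose fibers are exponentially contracted, so no genuinely new invariant sets can arise. For a \emph{continuous} $P_X$-invariant $\phi$ this is clean: $\phi=\phi\circ P_X^n$ forces $\phi^+=\phi^-$ by the contraction estimate above, hence $\phi$ is constant on leaves, descends to an $f_X$-invariant function, and is $\bar{\mu}_X$-a.e.\ constant by ergodicity of $\bar{\mu}_X$, so $\phi$ is $\tilde{\mu}_X$-a.e.\ constant. The delicate point is to run the same argument for an arbitrary \emph{measurable} invariant function: I would pass to the disintegration of $\tilde{\mu}_X$ over $\bar{\mu}_X$ along the leaves and use that the contraction makes Birkhoff averages asymptotically constant on each fiber, so that the $P_X$-invariant sets are, modulo $\tilde{\mu}_X$, preimages under $\pi_X$ of Borel subsets of $I$; ergodicity of $\bar{\mu}_X$ then finishes. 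This fiber-to-base step is the technical heart, and it is exactly what is carried out in \cite[Corollary 6.2]{araujo1}.
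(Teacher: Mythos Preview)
The paper does not actually prove this lemma: it states the result and refers to \cite[Corollary~6.2]{araujo1} for the proof. Your proposal is a correct reconstruction of the standard argument behind that cited result --- monotonicity of $a_n,b_n$ via the semiconjugacy and $f_X$-invariance of $\bar\mu_X$, convergence of $b_n-a_n$ to zero by uniform fiber contraction and uniform continuity, Riesz representation for the limiting functional, and invariance from $L(\phi\circ P_X)=\lim a_{n+1}$ --- and you explicitly defer the ergodicity step to the same reference the paper cites, so you and the paper are fully aligned on that point.

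One small remark: the ``uniqueness'' you obtain from Riesz is only the trivial statement that the displayed formula determines $\tilde\mu_X$. The substantive uniqueness (that $\tilde\mu_X$ is the unique $P_X$-invariant ergodic probability projecting to $\bar\mu_X$, hence the unique SRB measure for $P_X$) is a consequence of the ergodicity argument you defer to \cite{araujo1}, so it is also covered by that citation, but it is worth being aware that this is where it comes from rather than from Riesz.
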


The measure  $\tilde{\mu}_X$ is an SRB measure for $P_X$ that we shall call the \emph{lift} of $\bar\mu_X$. Indeed, the uniform contraction of the stable leaves implies that the forward time averages of any pair $\xi_1, \xi_2$ of points on a same stable leaf for continuous function $\phi:\Sigma\to\mathbb{R}$ are equal 

$$
\lim\limits_{n\to\infty}\frac 1n\sum\limits_{j=0}^{n-1}\phi(P_X^j(\xi_1))=\lim\limits_{n\to\infty}\frac 1n\sum\limits_{j=0}^{n-1}\phi(P_X^j(\xi_2)).
$$ 
Hence the inverse image of the basin of $\bar{\mu}_X$ under $\pi_X$ is contained in the basin of $\tilde{\mu}_X$.  This shows that the basin of $\tilde{\mu}_X$ contains an entire strip of positive Lebesgue measure, because the basin of $\bar{\mu}_X$ is a subset of positive Lebesgue measure.

On the other hand, since the density $d\bar{\mu}_X/d\lambda$ is bounded, we conclude that the return time is integrable with respect to $\tilde{\mu}_X$. Then we can saturate this measure along the flow to obtain a unique SRB measure $\mu_X$ for the flow, supported on the attractor $\Lambda_X$, whose ergodic basin covers a full Lebesgue measure subset of points on the topological basin of attraction; see \cite[Section~7]{araujo1}.

\begin{proposition}
The flow of each $X\in\mathcal X$ has a unique SRB measure $\mu_X$ given for any continuous map $\varphi:U\to\mathbb{R}$ by
$$
\int\varphi~d\mu_X=\frac{1}{\tilde{\mu}_X(\tau_X)}\int\int_0^{\tau_X(\xi)}\varphi(X(\xi,t))dt d\tilde{\mu}_X(\xi),
$$
where $\tilde{\mu}_X(\tau_X)=\int\tau_X~d\tilde{\mu}_X$.
\end{proposition}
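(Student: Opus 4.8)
The plan is to realize $\mu_X$ as the normalized saturation along the flow of the Poincaré SRB measure $\tilde\mu_X$, i.e.\ to regard the flow restricted to the region swept out by $\Sigma$ as a special (suspension) flow over $(\Sigma,P_X,\tilde\mu_X)$ with roof function $\tau_X$, and then to transport the invariance, SRB, and uniqueness properties of $\tilde\mu_X$ through this correspondence. First I would check that the right-hand side of the displayed formula defines a genuine Borel probability measure. The only delicate point is finiteness of the normalizing constant $\tilde\mu_X(\tau_X)$: since $\tau_X$ is unbounded near $\Gamma$, integrability is not automatic. It follows, however, from the logarithmic bound \eqref{eq.retime} together with the fact that $\tilde\mu_X$ projects to $\bar\mu_X$ under $\pi_X$ and that $d\bar\mu_X/d\lambda$ is bounded by Proposition~\ref{pr.keller}: because $x\mapsto -\log|x-O_X|$ is $\lambda$-integrable on $I$, we get
$$
\int\tau_X\,d\tilde\mu_X\le C\int -\log|\pi_X(\xi)-O_X|\,d\tilde\mu_X(\xi)\le CM\int_I -\log|x-O_X|\,d\lambda<\infty.
$$
This is the finiteness input on which the whole construction rests.

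Next I would verify flow-invariance. For a continuous $\varphi$ and $s>0$, the claim $\int\varphi\circ X^s\,d\mu_X=\int\varphi\,d\mu_X$ reduces, after cancelling the normalizing constant and using $X^s\circ X^t=X^{t+s}$, to
$$
\int_\Sigma\int_0^{\tau_X(\xi)}\varphi(X^{t+s}(\xi))\,dt\,d\tilde\mu_X(\xi)=\int_\Sigma\int_0^{\tau_X(\xi)}\varphi(X^{t}(\xi))\,dt\,d\tilde\mu_X(\xi).
$$
I would establish this by the standard special-flow bookkeeping: split each inner integral at the values of $t$ at which the orbit crosses $\Sigma$, use the defining relation $X^{\tau_X(\xi)}(\xi)=P_X(\xi)$ to re-express the pieces as integrals over successive iterates $P_X^k(\xi)$, and then invoke the $P_X$-invariance of $\tilde\mu_X$ to telescope the resulting sum back to the original integral. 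Continuity of $\varphi$ and dominated convergence, again using integrability of $\tau_X$, justify the manipulations.

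Finally I would address the SRB and uniqueness claims. For the SRB property I would relate flow time averages to Birkhoff averages of $P_X$: for $\xi$ in the basin of $\tilde\mu_X$ I apply Birkhoff's Ergodic Theorem to $P_X$ with the two observables $\Phi(\eta)=\int_0^{\tau_X(\eta)}\varphi(X^t(\eta))\,dt$ and $\tau_X$, and control the flow parameter $T$ between consecutive return times, obtaining
$$
\frac1T\int_0^T\varphi(X^t(\xi))\,dt\;\longrightarrow\;\frac{\int\Phi\,d\tilde\mu_X}{\int\tau_X\,d\tilde\mu_X}=\int\varphi\,d\mu_X.
$$
Since the basin of $\tilde\mu_X$ already carries positive Lebesgue measure on $\Sigma$ and saturates to a positive-volume subset of $U$, the basin of $\mu_X$ has positive volume. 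For uniqueness I would use that $\Sigma$ is a global cross-section: any flow-invariant SRB measure with integrable return time induces a $P_X$-invariant measure on $\Sigma$, and the assignment between such measures and flow-invariant measures is a bijection, so the uniqueness of $\tilde\mu_X$ (from the preceding Lemma and the uniqueness of $\bar\mu_X$) forces the flow measure to be $\mu_X$. The main obstacle will be the SRB step: upgrading $\tilde\mu_X$-almost-everywhere convergence to Lebesgue-almost-everywhere convergence of the flow time averages, and in particular controlling the boundary remainder of the time integral over the last incomplete return, which is exactly where the uniform integrability furnished by \eqref{eq.retime} and Proposition~\ref{pr.keller} is needed.
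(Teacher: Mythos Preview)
Your proposal is correct and aligns with the paper's treatment. The paper does not give a self-contained proof of this proposition: it merely notes, in the paragraph immediately preceding the statement, that the boundedness of $d\bar\mu_X/d\lambda$ makes $\tau_X$ integrable with respect to $\tilde\mu_X$, and then refers to \cite[Section~7]{araujo1} for the saturation of $\tilde\mu_X$ along the flow and the resulting SRB and uniqueness properties. Your sketch is exactly the standard suspension-flow construction underlying that reference --- integrability of the roof via \eqref{eq.retime} and Proposition~\ref{pr.keller}, invariance by the usual special-flow bookkeeping, the SRB property by transferring Birkhoff averages for $P_X$ to flow time averages, and uniqueness via the bijection between $P_X$-invariant and flow-invariant probabilities --- so there is no methodological difference to report. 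One minor remark: your worry about ``upgrading $\tilde\mu_X$-almost-everywhere convergence to Lebesgue-almost-everywhere convergence'' is already handled by the paper's observation that the basin of $\tilde\mu_X$ contains $\pi_X^{-1}$ of the basin of $\bar\mu_X$, hence an entire strip of positive Lebesgue measure in $\Sigma$; you do not need any extra uniform-integrability argument for that step.
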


\section{Statistical stability for the Poincar\'e map} \label{se.poincare}

Here  we prove the statistical stability of the Poincar\'e maps on the cross-section $\Sigma$, i.e. the SRB measures $\tilde{\mu}_X$ depend continuously on the vector fields.
Let $(X_n)_{n\geq 1}$ be any sequence in $\mathcal X$ converging to $X\in\mathcal X$ in the $C^2$ topology. To shorten notations, we shall use subindex $n$ instead of $X_n$, for $n\geq 1$, and no subindex instead of $X$. 

Let $\phi:\Sigma\to\mathbb R$ be an arbitrary continuous function. 

\begin{lemma}\label{le.contleb} Given $m\ge 1$ and $\epsilon>0$, there is $n_0=n_0(m,\epsilon)$ such that for all $n\ge n_0$
 $$\int\left| (\phi\circ P_n^m)^+-(\phi\circ P^m)^+\right|~d\lambda<\epsilon.$$
\end{lemma}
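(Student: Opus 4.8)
The plan is to prove the estimate by splitting $I$ into a small neighbourhood of the finitely many points at which $f^m$ degenerates and its complement, bounding the integrand trivially on the former and proving uniform convergence of the $+$-functions on the latter. Since $\Sigma$ is compact and $\phi$ continuous, $\phi$ is bounded and uniformly continuous; in particular the integrand $\bigl|(\phi\circ P_n^m)^+-(\phi\circ P^m)^+\bigr|$ is everywhere at most $2\|\phi\|_\infty$. All the dynamical input will come from the $C^2$ convergence $X_n\to X$, which yields $P_n\to P$ uniformly away from the line of discontinuity $\Gamma$, the convergence $f_n\to f$ in $C^2$ with $O_n\to O$, and the $C^2$ convergence of the leaves of $\mathcal F_n$ to those of $\mathcal F$, equivalently $\pi_n\to\pi$ uniformly.

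First I would locate the bad set. The map $P$ is discontinuous only along $\Gamma$, so $P^m$ is discontinuous only along $\bigcup_{j=0}^{m-1}P^{-j}(\Gamma)$, a finite union of leaves projecting under $\pi$ to the finite set $D=\bigcup_{j=0}^{m-1}f^{-j}(O)\subset I$; likewise $P_n^m$ is discontinuous along a finite set projecting to $D_n=\bigcup_{j=0}^{m-1}f_n^{-j}(O_n)$. Because $f_n\to f$ in $C^2$ with $O_n\to O$ and the two-branch combinatorics stabilise, one has $D_n\to D$ pointwise. Hence, given $\delta>0$, there is $n_1$ such that $D_n$ lies in the $\delta$-neighbourhood $V_\delta$ of $D$ for all $n\ge n_1$. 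Put $K=I\setminus V_\delta$; as $D$ is finite we may choose $\delta$ so small that $2\|\phi\|_\infty\,\lambda(V_\delta)<\epsilon/2$, which already handles the integral over $V_\delta$.

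On the compact set $K$, every point has its first $m$ iterates under $f$ bounded away from $O$ (by expansion, $f^j(x)$ near $O$ would force $x$ near $f^{-j}(O)\subset D$), so the fibres over $K$ together with their images under $P,\dots,P^{m-1}$ avoid a neighbourhood of $\Gamma$; thus $P^m$ and, for $n\ge n_1$, $P_n^m$ are continuous on a fixed compact tube over $K$. I would then prove $\sup_{x\in K}\bigl|(\phi\circ P_n^m)^+(x)-(\phi\circ P^m)^+(x)\bigr|\to 0$. Writing $\psi_n=\phi\circ P_n^m$, $\psi=\phi\circ P^m$, $F_n=\pi_n^{-1}(x)$ and $F=\pi^{-1}(x)$, the elementary inequality
\[
\bigl|\sup_{F_n}\psi_n-\sup_{F}\psi\bigr|\le\|\psi_n-\psi\|_\infty+\omega\bigl(\dist_H(F_n,F)\bigr),
\]
with $\omega$ a modulus of continuity of $\psi$ on the tube and the uniform norm taken over it, reduces the claim to two facts: $\psi_n\to\psi$ uniformly, which follows by composing $P_n\to P$ with itself $m$ times (the intermediate iterates avoiding $\Gamma$ by the choice of $K$) together with the uniform continuity of $\phi$; and $\dist_H(F_n,F)\to0$ uniformly in $x\in K$, which follows from $\pi_n\to\pi$ and the uniform transversality of the leaves. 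Choosing $n_0\ge n_1$ so that this supremum is below $\epsilon/2$ and recalling $\lambda(K)\le\lambda(I)=1$ gives $\int_K\bigl|(\phi\circ P_n^m)^+-(\phi\circ P^m)^+\bigr|\,d\lambda<\epsilon/2$; adding the two pieces yields the lemma.

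I expect the heart of the matter to be the two uniform-convergence inputs near $\Gamma$: verifying that $P_n^m\to P^m$ uniformly on the tube over $K$ (so that the composition does not lose control as intermediate images approach the discontinuity) and that the moving discontinuity sets $D_n$ genuinely collapse into $V_\delta$. Once these are secured, the supremum and uniform-continuity estimate together with the trivial bound on $V_\delta$ are routine.
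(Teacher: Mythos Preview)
Your argument is correct and in fact a bit more transparent than the paper's. Both proofs rest on the same dichotomy---bound the integrand trivially on a small ``bad'' set near the discontinuities of $P^m$ and establish uniform convergence on the complement---but the decompositions differ. The paper chooses the bad set via a return-time threshold $B_n=\{\sum_{i=0}^{m-1}\tau_n\circ P_n^i>N\}$, controls $\lambda(B_n)$ through the logarithmic bound~\eqref{eq.retime}, and then further splits $B_n^c$ using an auxiliary set $A_n$ where the $X_n$- and $X$-return-time sums differ by at least~$1$; only after this two-step truncation does it invoke continuous dependence of trajectories on bounded time intervals. You instead take the bad set to be a fixed $\delta$-neighbourhood $V_\delta$ of the finite discontinuity set $D=\bigcup_{j<m}f^{-j}(O)$, which avoids the return-time estimate entirely and makes the smallness of the bad set immediate. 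Your approach also makes explicit, via the Hausdorff-distance inequality for $\sup_{F_n}\psi_n$ versus $\sup_F\psi$, that the $(\,\cdot\,)^+$ operation itself depends on $n$ through $\pi_n$; the paper absorbs this into the phrase ``continuous variation of trajectories'' without comment. The one place where your write-up is slightly brisk is the inductive claim that for $x\in K$ and large $n$ the intermediate iterates $f_n^j(x)$ stay uniformly away from $O_n$ (needed so that $P_n^m$ is continuous with bounded return time on the tube over $K$); this follows from $f_n\to f$ uniformly on compacta away from $O$ together with $|f^j(x)-O|\ge c^j\delta$, but deserves a sentence.
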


\begin{proof} Given $m\ge 1$, we can write  $\displaystyle{\int|(\phi\circ P_n^m)^+-(\phi\circ P^m)^+|d\lambda}$ as the sum
\begin{equation}\label{sum.int}
\int_{B_n}|{(\phi\circ P_n^m)^+-(\phi\circ P^m)^+|d\lambda}
+\int_{B_n^c}|{(\phi\circ P_n^m)^+-(\phi\circ P^m)^+|d\lambda},
\end{equation}
where $B_n=\left\{\sum\limits_{i=0}^{m-1}\tau_n\circ P_n^i> N\right\}$ and $N=N(m)$ is some large number. Now,  by the last property in  the definition of the  geometric Lorenz flows and the fact that  the leaves of~$\mathcal F_n$ are nearly vertical lines,   there is some constant $C_1>0$ such that
 \begin{eqnarray*}
\lambda(B_n)&\le &C_1\sum_{i=0}^{m-1}\left|\{x\in I: -C\log|f_n^i(x)-O_n|>N\}\right|\\
&\le &
C_1\sum_{i=0}^{m-1} \left|f_{n}^{-i}\left(O_n-e^{-\frac{N}{C}},O_n+e^{-\frac{N}{C}}\right)\right|\\
&\le & C_1\sum_{i=0}^{m-1}\left(2/c\right)^i e^{-\frac{N}{C}},
\end{eqnarray*}
where $c>1$ is the uniform lower bound for the derivative.
As $\phi$ is bounded, the first integral in \eqref{sum.int} can be made arbitrarily small, provided $N$ is big enough.

We now estimate the second integral in  \eqref{sum.int}. Considering
$$
A_n=\left\{\xi: \left|\sum\limits_{i=0}^{m-1}\left(\tau_n\circ P_n^i\right)(\xi)-\sum\limits_{i=0}^{m-1}\left(\tau\circ P^i\right)(\xi)\right|\ge 1\right\},
$$
we easily have that the second integral in~\eqref{sum.int} is bounded by
$$
\int_{\left\{\sum\limits_{i=0}^{m-1}\tau\circ P^i\leq N+1\right\}}|{(\phi\circ P_n^m)^+-(\phi\circ P^m)^+|d\lambda}+\int_{ A_n}|{(\phi\circ P_n^m)^+-(\phi\circ P^m)^+|d\lambda}.
$$
Observe that 
\begin{equation}\label{eq.and}
\lambda( A_n)\to0 ,\quad\text{as $n\to\infty$},
\end{equation}
because for large $n$, a point belongs to $A_n$ only if it belongs to some small neighborhood of the (finite) set of discontinuity lines of $P^m$.
As $\phi$ is bounded, we have that the second term in the last integral above is bounded by
$2\lambda(A_n)\sup\phi$. Then, \eqref{eq.and}  implies that the second term in that integral is small for sufficiently large $n$. 

It remains to control
$$
\int_{\left\{\sum_{i=0}^{m-1}\tau\circ P^i\leq N+1\right\}}|{(\phi\circ P_n^m)^+-(\phi\circ P^m)^+|d\lambda}.
$$
Observe that the points in $\left\{\sum_{i=0}^{m-1}\tau\circ P^i\leq N+1\right\}$ must necessarily be out of a neighborhood of the discontinuity lines of the  map $P^m$. If $n$ is sufficiently large, then the same holds for $P_n^m$. This means that the return time associated to these maps is uniformly bounded for large $n$. Then, just by the continuous variation of trajectories in finite periods of time, we can make $|(\phi\circ P_n^m)^+-(\phi\circ P^m)^+|$ small for large $n$.
\end{proof}

\begin{lemma}\label{le.limi}
For any $m\ge 1$ we have
$$
{\displaystyle\lim\limits_{n\to\infty}{\int(\phi\circ P_n^m)^+d\bar{\mu}_n}=\int{(\phi\circ P^m)^+d\bar{\mu}}}.
$$
\end{lemma}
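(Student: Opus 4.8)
The plan is to pass from the measures to their densities and then split the difference into two pieces, each handled by one of the two continuity results already established. Write $h_n=d\bar\mu_n/d\lambda$ and $h=d\bar\mu/d\lambda$ for the invariant densities of $f_n$ and $f$, and abbreviate $\psi_n=(\phi\circ P_n^m)^+$ and $\psi=(\phi\circ P^m)^+$. Since $\phi$ is continuous on the compact section $\Sigma$ it is bounded, and by the definition of the $+$-operation in \eqref{EQ.+-} we have $|\psi_n|,|\psi|\le\sup|\phi|<\infty$ for every $n$. The starting identity is the standard product splitting
$$
\int\psi_n\,d\bar\mu_n-\int\psi\,d\bar\mu=\int(\psi_n-\psi)\,h_n\,d\lambda+\int\psi\,(h_n-h)\,d\lambda,
$$
obtained by writing each integral against $\bar\mu_n$ or $\bar\mu$ as an integral of the density against $\lambda$.

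For the first term I would use the uniform bound on densities together with the $L^1$-convergence of the observables. By Proposition~\ref{pr.keller} there is $M>0$ with $h_n<M$ for every $n$, hence
$$
\left|\int(\psi_n-\psi)\,h_n\,d\lambda\right|\le M\int|\psi_n-\psi|\,d\lambda,
$$
and the right-hand side tends to $0$ as $n\to\infty$ by Lemma~\ref{le.contleb}. For the second term I would use the boundedness of the fixed limiting observable together with the strong statistical stability of the one-dimensional maps:
$$
\left|\int\psi\,(h_n-h)\,d\lambda\right|\le(\sup|\phi|)\,\|h_n-h\|_{L^1(\lambda)},
$$
which tends to $0$ by the $L^1$-continuity $f_n\longmapsto h_n$ asserted in Proposition~\ref{pr.keller}. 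Adding the two estimates gives the claim.

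The only genuinely delicate point is that the integrand of the first term mixes a \emph{varying} observable $\psi_n$ with a \emph{varying} density $h_n$; this is precisely why the \emph{uniform} bound $h_n<M$ from Proposition~\ref{pr.keller}, rather than mere boundedness of each individual density, is essential, since it lets me replace the $d\bar\mu_n$-integral by an $M$-multiple of the plain Lebesgue integral $\int|\psi_n-\psi|\,d\lambda$ that Lemma~\ref{le.contleb} controls. Everything else is routine: the decomposition $ab-a'b'=(a-a')b+a'(b-b')$, with one factor of each product handled by one of the two available $L^1$-convergences. I do not expect any serious obstacle here; the real content has already been front-loaded into Lemma~\ref{le.contleb} and Proposition~\ref{pr.keller}.
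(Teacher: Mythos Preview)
Your proof is correct and follows essentially the same argument as the paper: the same add-and-subtract splitting into $\int(\psi_n-\psi)\,d\bar\mu_n+\int\psi\,(d\bar\mu_n-d\bar\mu)$, with the first piece handled via the uniform density bound $h_n<M$ from Proposition~\ref{pr.keller} together with Lemma~\ref{le.contleb}, and the second via the $L^1$-convergence of densities. Your emphasis on why the \emph{uniform} bound is needed is exactly the point the paper's argument relies on.
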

\begin{proof}
Given $m\in\mathbb{N}$, then
\begin{align*}
&\left|\int{(\phi\circ P_n^m)^+d\bar{\mu}_n}-\int{(\phi\circ P^m)^+d\bar{\mu}}\right|\leq\\
&\hspace{2cm}\left|\int{(\phi\circ P_n^m)^+d\bar{\mu}_n}-\int{(\phi\circ P^m)^+d\bar{\mu}_n}\right|\\
&\hspace{3cm}+\left|\int{(\phi\circ P^m)^+d\bar{\mu}_n}-\int{(\phi\circ P^m)^+d\bar{\mu}}\right|.
\end{align*}
Using that the density of $\bar\mu_n$ converges to the density of $\bar\mu$ in the $L^1$-norm, by Proposition~\ref{pr.keller} and the fact that $\phi$ is bounded, we easily see that
the second term in the sum above tends to zero when $n$ goes to $\infty$. 
So, we are left to prove that the first term converges to zero when $n$ goes to infinity. In fact, using the uniform boundedness of the densities in Proposition~\ref{pr.keller}, we obtain
\begin{align*}
\left|\int{(\phi\circ P_n^m)^+d\bar{\mu}_n}-\int{(\phi\circ P^m)^+d\bar{\mu}_n}\right|
&\leq\int\left|(\phi\circ P_n^m)^+-(\phi\circ P^m)^+\right|\left|\frac{d\bar{\mu}_n}{d\lambda}\right|d\lambda\\
&\leq M\int|(\phi\circ P_n^m)^+-(\phi\circ P^m)^+|d\lambda
\end{align*}
which, by Lemma~\ref{le.contleb},  can be made arbitrarily small for $n$ sufficiently large.
\end{proof}

\begin{proposition}\label{pro1}
$
\displaystyle\lim\limits_{n\to\infty}\int\phi~d\tilde{\mu}_n=\int\phi~d\tilde{\mu}.
$
\end{proposition}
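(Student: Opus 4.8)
The plan is to read the desired identity as an interchange of two limits on the double array
$$a_{n,m}=\int(\phi\circ P_n^m)^+\,d\bar\mu_n,\qquad a_{\infty,m}=\int(\phi\circ P^m)^+\,d\bar\mu.$$
Applying \cite[Corollary 6.2]{araujo1} to each $X_n$ and to $X$ gives $\lim_{m\to\infty}a_{n,m}=\int\phi\,d\tilde\mu_n$ for every $n$ and $\lim_{m\to\infty}a_{\infty,m}=\int\phi\,d\tilde\mu$, while Lemma~\ref{le.limi} gives $\lim_{n\to\infty}a_{n,m}=a_{\infty,m}$ for each fixed $m$. Thus the statement is exactly a Moore--Osgood interchange of the $n$- and $m$-limits, and it will follow once I show that the convergence $a_{n,m}\to\int\phi\,d\tilde\mu_n$ as $m\to\infty$ is \emph{uniform in} $n$.

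To produce this uniformity, first I would record a sandwich estimate. Recall that $\tilde\mu_n$ is $P_n$-invariant and is the lift of $\bar\mu_n$, so $(\pi_n)_*\tilde\mu_n=\bar\mu_n$. By the very definition of $(\phi\circ P_n^m)^\pm$ we have $(\phi\circ P_n^m)^-(\pi_n(\xi))\le\phi(P_n^m(\xi))\le(\phi\circ P_n^m)^+(\pi_n(\xi))$ for every $\xi\in\Sigma$. Integrating against $\tilde\mu_n$, using invariance on the middle term and the projection identity on the outer ones, yields
$$\int(\phi\circ P_n^m)^-\,d\bar\mu_n\ \le\ \int\phi\,d\tilde\mu_n\ \le\ \int(\phi\circ P_n^m)^+\,d\bar\mu_n,$$
and therefore
$$0\ \le\ a_{n,m}-\int\phi\,d\tilde\mu_n\ \le\ \int\left[(\phi\circ P_n^m)^+-(\phi\circ P_n^m)^-\right]d\bar\mu_n.$$

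The key point is then to bound the right-hand side uniformly in $n$. Here I would use that the invariant foliations $\mathcal F_n$ are uniformly contracting with constants $C'>0$ and $0<\rho<1$ that can be chosen the same for all elements of the $C^2$ neighbourhood $\mathcal X$: for $\xi_1,\xi_2$ on a common leaf one has $\dist(P_n^m(\xi_1),P_n^m(\xi_2))\le C'\rho^m\dist(\xi_1,\xi_2)\le C'\rho^m\operatorname{diam}\Sigma$. Since $\Sigma$ is compact, $\phi$ is uniformly continuous with some modulus of continuity $\omega_\phi$, so for every $x\in I$,
$$(\phi\circ P_n^m)^+(x)-(\phi\circ P_n^m)^-(x)\ \le\ \omega_\phi\!\left(C'\rho^m\operatorname{diam}\Sigma\right)=:\delta_m,$$
with $\delta_m\to0$ as $m\to\infty$ independently of $n$. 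Integrating, $\big|a_{n,m}-\int\phi\,d\tilde\mu_n\big|\le\delta_m$ for all $n$, which is the required uniformity; the identical argument at $X$ gives $\big|a_{\infty,m}-\int\phi\,d\tilde\mu\big|\le\delta_m$ as well.

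Finally I would assemble the three-term estimate
$$\left|\int\phi\,d\tilde\mu_n-\int\phi\,d\tilde\mu\right|\le\left|\int\phi\,d\tilde\mu_n-a_{n,m}\right|+\left|a_{n,m}-a_{\infty,m}\right|+\left|a_{\infty,m}-\int\phi\,d\tilde\mu\right|.$$
Given $\epsilon>0$, the uniform bound of the previous paragraph makes the first and third terms at most $\delta_m<\epsilon/3$ once $m$ is fixed large enough; with that $m$ now fixed, Lemma~\ref{le.limi} makes the middle term $<\epsilon/3$ for all $n$ large. I expect the genuinely delicate step to be justifying that the contraction constants $C'$ and $\rho$ can be taken uniform over the whole family $\mathcal X$, rather than only for each $X$ separately; this rests on the persistence of the $C^2$ uniformly contracting foliation across the $C^2$ neighbourhood guaranteed by Definition~\ref{de.lorenzflows}(3) together with \cite[Theorem 3.10]{araujo2}. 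Everything else is the standard diagonal argument.
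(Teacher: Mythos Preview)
Your proof is correct and follows essentially the same strategy as the paper: both reduce the statement to an interchange of the $n$- and $m$-limits on the array $a_{n,m}=\int(\phi\circ P_n^m)^+\,d\bar\mu_n$, obtain the uniformity in $n$ of the $m$-limit from the uniform contraction of the stable foliation together with the uniform continuity of $\phi$ on the compact $\Sigma$, and then invoke Lemma~\ref{le.limi} for the remaining limit. The only cosmetic difference is that you bound $\bigl|a_{n,m}-\int\phi\,d\tilde\mu_n\bigr|$ directly via the sandwich $(\phi\circ P_n^m)^-\le\phi\circ P_n^m\le(\phi\circ P_n^m)^+$ and the projection identity $(\pi_n)_*\tilde\mu_n=\bar\mu_n$, whereas the paper shows $(a_{n,m})_m$ is uniformly Cauchy using only the $f_n$-invariance of $\bar\mu_n$; your flagged delicate point---that the contraction constants can be taken uniform over $\mathcal X$---is exactly the assumption the paper also uses.
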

\begin{proof}
The compactness of $\Sigma$ implies that $\phi$ is uniformly continuous and, therefore, given $\epsilon>0$ there exists $\delta>0$ such that 
\begin{equation}\label{unifcont}
|\phi(\xi_1)-\phi(\xi_2)|<\epsilon,\quad\mbox{for all $\xi_1,\xi_2\in\Sigma$ with $\dist(\xi_1,\xi_2)<\delta$}.
\end{equation}
As we know, the rate of the contraction of the stable foliation on $\Sigma$ is uniform for all vector fields in $\mathcal{X}$. 
So, the first return maps are uniformly contractive. In particular, given $\delta>0$ there exists $m_0>0$ such that for all  $n$ we have 
\begin{equation}\label{smalldiam}
\mbox{diam}(P_n^m)(\gamma)\leq\delta,\quad\mbox{for all }\gamma\in\mathcal F_n\mbox{ and }m\geq m_0.
\end{equation}
Take arbitrary numbers $m_1,m_2$ with  $m_2\geq m_1\geq m_0$.
Given $x\in I$, let $\gamma$ be the leaf in $\mathcal F_n$ containing $x$ and 
$\gamma_{m_2-m_1}$ be the leaf in $\mathcal F_n$ containing $P_n^{m_2-m_1}(\gamma)$. We have
 $$(\phi\circ P_n^{m_2})^+(x)=\sup \phi|_{P_n^{m_2}(\gamma)}=\sup \phi|_{P_n^{m_1}(P_n^{m_2-m_1}(\gamma))}.$$
As $f^{m_2-m_1}(x)\in \gamma_{m_2-m_1}$, we also have
 $$(\phi\circ P_n^{m_1})^+(f^{m_2-m_1}(x))=\sup \phi|_{P_n^{m_1}(\gamma_{m_2-m_1})}$$
Then, since $\gamma_{m_2-m_1}$ contains $P_n^{m_2-m_1}(\gamma)$, it follows from \eqref{unifcont} and \eqref{smalldiam} that
\begin{align*}
\left|(\phi\circ P_n^{m_2})^+(x) -(\phi\circ P_n^{m_1})^+(f^{m_2-m_1}(x))\right|<\epsilon.
\end{align*}
Knowing that $\displaystyle{\int(\phi\circ P_n^{m_1})^+d\bar{\mu}_n=\int(\phi\circ P_n^{m_1})^+\circ f_n^{m_2-m_1}d\bar{\mu}_n}$, because $\bar{\mu}_n$ is an $f_n$ invariant probability measure, we obtain
\begin{align*}
\left|\int(\phi\circ P_n^{m_2})^+d\bar{\mu}_n-\int(\phi\circ P_n^{m_1})^+d\bar{\mu}_n\right|\leq\epsilon.
\end{align*}
Consequently, the sequence $\displaystyle{\left(\int{(\phi\circ P_n^m)^+}d\bar{\mu}_n\right)_{m,n}}$ is uniformly Cauchy, because $m_0$ does not depend on $n$.  Hence,
$$\lim\limits_{n\to\infty}\lim\limits_{m\to\infty}\int{(\phi\circ P_n^m)^+~d\bar{\mu}_n}=\lim\limits_{m\to\infty}\lim\limits_{n\to\infty}\int{(\phi\circ P_n^m)^+~d\bar{\mu}_n}.$$
Therefore,
\begin{align*}
\lim\limits_{n\to\infty}\int\phi~d\tilde{\mu}_n=\lim\limits_{n\to\infty}\lim\limits_{m\to\infty}\int{(\phi\circ P_n^m)^+~d\bar{\mu}_n}
=\lim\limits_{m\to\infty}\lim\limits_{n\to\infty}\int{(\phi\circ P_n^m)^+~d\bar{\mu}_n},
\end{align*}
and by Lemma~\ref{le.limi},
 $${\displaystyle\lim\limits_{n\to\infty}{\int(\phi\circ P_n^m)^+d\bar{\mu}_n}=\int{(\phi\circ P^m)^+d\bar{\mu}}}.$$
Taking limit with $m\to\infty$ we complete the proof, by definition of $\tilde\mu$.

\end{proof}

\section{Statistical stability for the flow}

Now we prove Theorem \ref{main}. Let $(X_n)_{n\geq 1}$ be a sequence in $\mathcal{X}$ converging to $ X\in \mathcal X$ in the $C^2$ topology. Using again shortened subindex notation as in Section~\ref{se.poincare}, we need to  prove that $\mu_n\to\mu$ in the weak$^\ast$ topology. 
Let $\varphi:\bar{U}\to\mathbb{R}$ be any continuous function. We have
$$
\int\varphi~d\mu_n=\frac{1}{\tilde{\mu}_n(\tau_n)}\int\int_0^{\tau_n(\xi)}\varphi(X_n(\xi,t))dt d\tilde{\mu}_n(\xi).
$$
Adding and subtracting the term 
$$
\frac{1}{\tilde{\mu}_n(\tau_n)}\int\int_0^{\tau(\xi)}\varphi(X(\xi,t))dtd\tilde{\mu}(\xi),
$$
we have  $\left|\int\varphi~d\mu_n-\int\varphi~d\mu\right|$ bounded by the sum of two terms
\begin{equation}\label{333}
\left|\frac{1}{\tilde{\mu}_n(\tau_n)}-\frac{1}{\tilde{\mu}(\tau)}\right|\int\int_0^{\tau(\xi)}|\varphi(X(\xi,t))|dtd\tilde{\mu}(\xi).
\end{equation}
and
\begin{equation}\label{444}
\frac{1}{\tilde{\mu}_n(\tau_n)}\left|\int{\int_0^{\tau_n(\xi)}{\varphi(X_n(\xi,t))dtd\tilde{\mu}_n(\xi)-\int\int_0^{\tau(\xi)}\varphi(X(\xi,t))dt}d\tilde{\mu}(\xi)}\right|.
\end{equation}
Our goal now is to show that  the terms in~\eqref{333} and~\eqref{444} converge to zero when $n\to\infty$. 

\begin{lemma}\label{lem01}
$\displaystyle{\lim_{n\to\infty}\int\tau_n~d\tilde{\mu}_n=\int\tau~d\tilde{\mu}}$.
\end{lemma}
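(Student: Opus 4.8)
The difficulty here is that $\tau_n$ is neither bounded nor continuous: by~\eqref{eq.retime} it has a logarithmic singularity along the discontinuity leaf, and both the function and the singular point $O_n$ vary with $n$. Hence Proposition~\ref{pro1}, which only provides weak$^\ast$ convergence $\tilde\mu_n\to\tilde\mu$ against \emph{continuous} test functions, cannot be applied directly to $\tau_n$. The plan is to truncate. Setting $\tau_n^{K}=\min\{\tau_n,K\}$ and $\tau^{K}=\min\{\tau,K\}$, I would bound
$$
\left|\int\tau_n\,d\tilde\mu_n-\int\tau\,d\tilde\mu\right|\le \int(\tau_n-\tau_n^{K})\,d\tilde\mu_n+\left|\int\tau_n^{K}\,d\tilde\mu_n-\int\tau^{K}\,d\tilde\mu\right|+\int(\tau-\tau^{K})\,d\tilde\mu,
$$
and show that the two tail terms are small uniformly in $n$ once $K$ is large, while the middle term tends to $0$ as $n\to\infty$ for each fixed $K$.

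For the tail terms the key is a \emph{uniform} integrability estimate. On $\{\tau_n>K\}$ one has $\{\tau_n>K\}\subseteq\{|\pi_n(\xi)-O_n|<e^{-K/C}\}$ by~\eqref{eq.retime}, and since the bounding function $-C\log|\pi_n(\xi)-O_n|$ depends on $\xi$ only through $x=\pi_n(\xi)$, I would push $\tilde\mu_n$ forward by $\pi_n$. Because $\tilde\mu_n$ is the lift of $\bar\mu_n$ it projects to $\bar\mu_n$ under $\pi_n$, whose density is bounded by $M$ (Proposition~\ref{pr.keller}); this gives
$$
\int_{\{\tau_n>K\}}\tau_n\,d\tilde\mu_n\le M\int_{\{|x-O_n|<e^{-K/C}\}}\bigl(-C\log|x-O_n|\bigr)\,dx=2MC\,e^{-K/C}\Bigl(1+\tfrac KC\Bigr),
$$
which tends to $0$ as $K\to\infty$ with a bound independent of $n$. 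The same estimate applies to $\int(\tau-\tau^{K})\,d\tilde\mu$, so both tail terms are controlled.

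For the middle term I would split $\int\tau_n^{K}\,d\tilde\mu_n-\int\tau^{K}\,d\tilde\mu$ as $\int(\tau_n^{K}-\tau^{K})\,d\tilde\mu_n+\bigl(\int\tau^{K}\,d\tilde\mu_n-\int\tau^{K}\,d\tilde\mu\bigr)$. The crucial observation is that $\tau^{K}$ is in fact \emph{continuous} on $\Sigma$: since $\tau\to+\infty$ as $\xi$ approaches the discontinuity leaf, $\tau^{K}$ equals the constant $K$ on a neighbourhood of that leaf, so the singularity is removed. Hence the second piece tends to $0$ directly by Proposition~\ref{pro1}. For the first piece I would fix a small tubular neighbourhood $V_\delta$ of the discontinuity leaf: on $V_\delta$ I use the crude bound $|\tau_n^{K}-\tau^{K}|\le K$ together with $\tilde\mu_n(V_\delta)\le 2M\delta$, while off $V_\delta$ the return times are uniformly bounded and, by continuous variation of trajectories over finite periods of time, $\tau_n\to\tau$ uniformly, so $\tau_n^{K}\to\tau^{K}$ uniformly there. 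Choosing $\delta$ small and then $n$ large makes $\int|\tau_n^{K}-\tau^{K}|\,d\tilde\mu_n$ as small as desired.

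I expect the main obstacle to be precisely the uniform tail control: everything hinges on combining the logarithmic bound~\eqref{eq.retime} with the uniform density bound $M$ and the fact that $\tilde\mu_n$ projects to $\bar\mu_n$ under $\pi_n$, so as to obtain integrability estimates that do not deteriorate as $n\to\infty$. Once this uniform integrability is in place, the truncation reduces the problem to the weak$^\ast$ statement already established in Proposition~\ref{pro1} together with the elementary finite-time continuity of the flow.
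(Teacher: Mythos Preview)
Your proposal is correct and follows essentially the same route as the paper: truncate $\tau_n$ and $\tau$ at a level $K$, control the two tail terms uniformly in $n$ by combining the logarithmic bound~\eqref{eq.retime} with the projection $(\pi_n)_*\tilde\mu_n=\bar\mu_n$ and the uniform density bound $M$ from Proposition~\ref{pr.keller}, and handle the bounded middle term via the weak$^\ast$ convergence of Proposition~\ref{pro1}. Your $V_\delta$-argument for the middle term is just a more explicit version of the paper's assertion that $\tau_{n,N}\to\tau_N$ uniformly, so the two proofs are effectively the same.
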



\begin{proof}
Let  $\tau_N$ and $\tau_{n,N}$ be defined by
$$\tau_N(\xi)=\min\{\tau(\xi), N\}\quad\text{and}\quad\tau_{n,N}(\xi)=\min\{\tau_n(\xi), N\}.
$$
Observe that for each fixed $ N\ge 1$ we have that $\tau_{N}$ and $\tau_{n,N}$  are bounded continuous functions, and $\tau_{n,N}$ converges uniformly  to $\tau_{N}$, as $n\to\infty$.
We have
\begin{align}\label{222}
\left|\int\right. &\tau_n~d\tilde{\mu}_n \left.-\int\tau~d\tilde{\mu}\right|\le \nonumber \\
& \left |\int\tau_n~d\tilde{\mu}_n-\int\tau_{n,N}~d\tilde{\mu}_n\right |+\left |\int\tau_{n,N}~ d\tilde{\mu}_n-\int\tau_N ~d\tilde{\mu}\right |+\left |\int\tau_N ~d\tilde{\mu}-\int\tau ~d\tilde{\mu}\right |.
\end{align}
Now we prove that the first term  is  small for large  $N$ (uniformly in $n$), and the calculation is similar for the third term.
We  have 
\begin{equation}\label{eq:tausatr}
\left |\int\tau_n~d\tilde{\mu}_n-\int\tau_{n,N}~d\tilde{\mu}_n\right |=\int(\tau_n-\tau_{n,N})~d\tilde{\mu}_n\leq\int(\tau_n-\tau_{n,N})^+~d\tilde{\mu}_n.
\end{equation}
Now, for $n,N,m\ge 1$ let 
$(\tau_n-\tau_{n,N})_{m}^+$
be defined by
$$(\tau_n-\tau_{n,N})_{m}^+(\xi)=\min\left\{(\tau_n-\tau_{n,N})^+(\xi),m\right\}.$$
Note that $(\tau_n-\tau_{n,N})_{m}^+$ converges monotonically to  $(\tau_n-\tau_{n,N})^+$  as $m\to\infty$. Note also that the functions $(\tau_n-\tau_{n,N})_{m}^+$ and $(\tau_n-\tau_{n,N})^+$ can be interpreted as the functions defined  in~$I$   introduced in~\eqref{EQ.+-}. Using the Monotone Convergence Theorem, the definition of $\tilde\mu_n$ and the uniform boundedness on the density of $\bar\mu_n$ we can write
\begin{align*}\int(\tau_n-\tau_{n,N})^+~d\tilde{\mu}_n
&=\lim\limits_{m\to\infty}\int(\tau_n-\tau_{n,N})_{m}^+~d\tilde{\mu}_n\\
&=\lim\limits_{m\to\infty}\int(\tau_n-\tau_{n,N})_{m}^+~d\bar{\mu}_n
\\
&\leq M\lim\limits_{m\to\infty}\int(\tau_n-\tau_{n,N})_{m}^+~d\lambda\\
&= M\int(\tau_n-\tau_{n,N})^+~d\lambda.
\end{align*}
Now, defining $A_{n,N}=\{x\in I :-C\log(x-O_n)>N\}$ by \eqref{eq.retime} we have 
$$\int(\tau_n-\tau_{n,N})^+~d\lambda\le \int_{A_{n,N}}-C\log(x-O_n)~dx=\int_0^{e^{-C/N}}-C\log x \,dx,$$
and this last integral is clearly  small  for large  $N$ (uniformly in $n$).

Finally, for each $N\ge 1$, the second term in~\eqref{222}  converges to zero as $n$ goes to $\infty$, since $\tilde\mu_n$ converges weakly to $\tilde\mu$, by Proposition~\ref{pro1},  and the functions $\tau_{n,N}$ are  continuous and converge uniformly to $\tau_N$ when $n\to\infty$.
\end{proof}

 Lemma \ref{lem01} implies that (\ref{333}) converges to zero as $n$ goes to infinity, since
$$
\left|\frac{1}{\tilde{\mu}_n(\tau_n)}-\frac{1}{\tilde{\mu}(\tau)}\right|\int\int_0^{\tau(\xi)}|\varphi(X(\xi,t))|dtd\tilde{\mu}(\xi)\leq \left|\frac{1}{\tilde{\mu}_n(\tau_n)}-\frac{1}{\tilde{\mu}(\tau)}\right|\|\varphi\|_{\infty}\tilde{\mu}(\tau).
$$ 
The next result  implies that (\ref{444}) converges to zero when $n$ goes to infinity.
\begin{lemma}
$ \displaystyle\lim\limits_{n\to+\infty}\int\int_0^{\tau_n(\xi)}\varphi(X_n(\xi,t))dtd\tilde{\mu}_n(\xi)=\int\int_0^{\tau(\xi)}\varphi(X(\xi,t))dtd\tilde{\mu}(\xi).
$
\end{lemma}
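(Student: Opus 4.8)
The plan is to mimic the truncation argument from the proof of Lemma~\ref{lem01}, now applied to the flow-integrated observable instead of to the return time itself. Writing
\[
\Phi_n(\xi)=\int_0^{\tau_n(\xi)}\varphi(X_n(\xi,t))\,dt,\qquad \Phi(\xi)=\int_0^{\tau(\xi)}\varphi(X(\xi,t))\,dt,
\]
the statement becomes $\int\Phi_n\,d\tilde\mu_n\to\int\Phi\,d\tilde\mu$. Using the truncated return times $\tau_{n,N}=\min\{\tau_n,N\}$ and $\tau_N=\min\{\tau,N\}$ already introduced in Lemma~\ref{lem01}, I would set $\Phi_{n,N}(\xi)=\int_0^{\tau_{n,N}(\xi)}\varphi(X_n(\xi,t))\,dt$ and $\Phi_N(\xi)=\int_0^{\tau_N(\xi)}\varphi(X(\xi,t))\,dt$, and bound $\left|\int\Phi_n\,d\tilde\mu_n-\int\Phi\,d\tilde\mu\right|$ by the sum
\[
\left|\int\Phi_n\,d\tilde\mu_n-\int\Phi_{n,N}\,d\tilde\mu_n\right|+\left|\int\Phi_{n,N}\,d\tilde\mu_n-\int\Phi_N\,d\tilde\mu\right|+\left|\int\Phi_N\,d\tilde\mu-\int\Phi\,d\tilde\mu\right|.
\]

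For the first and third terms I would use the boundedness of $\varphi$ to dominate them by $\|\varphi\|_\infty\int(\tau_n-\tau_{n,N})\,d\tilde\mu_n$ and $\|\varphi\|_\infty\int(\tau-\tau_N)\,d\tilde\mu$, respectively. The estimate established inside the proof of Lemma~\ref{lem01}, namely that $\int(\tau_n-\tau_{n,N})\,d\tilde\mu_n$ is small for large $N$ \emph{uniformly in} $n$ (and the analogous statement for the limiting integral), then makes both contributions arbitrarily small by choosing $N$ large, independently of $n$.

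The middle term is treated for each fixed $N$ by letting $n\to\infty$. The crucial point is that truncation removes the singular behaviour of $\tau$ near its discontinuity set, so $\tau_N$ and $\tau_{n,N}$ are bounded continuous functions, whence $\Phi_N$ and $\Phi_{n,N}$ are bounded and continuous on $\Sigma$. I would split it as
\[
\left|\int\Phi_{n,N}\,d\tilde\mu_n-\int\Phi_N\,d\tilde\mu_n\right|+\left|\int\Phi_N\,d\tilde\mu_n-\int\Phi_N\,d\tilde\mu\right|.
\]
The second summand tends to $0$ since $\Phi_N$ is a \emph{fixed} continuous function and $\tilde\mu_n\to\tilde\mu$ in the weak$^\ast$ topology by Proposition~\ref{pro1}. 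For the first summand I would prove $\|\Phi_{n,N}-\Phi_N\|_\infty\to0$: decomposing the difference as $\int_0^{\tau_{n,N}(\xi)}[\varphi(X_n(\xi,t))-\varphi(X(\xi,t))]\,dt$ plus the boundary term $\int_{\tau_N(\xi)}^{\tau_{n,N}(\xi)}\varphi(X(\xi,t))\,dt$, the first piece is at most $N\sup_{t\le N,\,\xi}|\varphi(X_n(\xi,t))-\varphi(X(\xi,t))|$, which vanishes because $X_n\to X$ in $C^2$ forces uniform convergence of the flows on the compact set $\bar U$ over the finite interval $[0,N]$, combined with the uniform continuity of $\varphi$; the boundary piece is at most $\|\varphi\|_\infty\,|\tau_{n,N}-\tau_N|$, which vanishes since $\tau_{n,N}\to\tau_N$ uniformly (as recorded in Lemma~\ref{lem01}). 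As the $\tilde\mu_n$ are probability measures, $\int|\Phi_{n,N}-\Phi_N|\,d\tilde\mu_n\le\|\Phi_{n,N}-\Phi_N\|_\infty\to0$.

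Assembling these, given $\epsilon>0$ I would first fix $N$ so that the first and third terms are each below $\epsilon/3$ uniformly in $n$, and then take $n$ large to push the middle term below $\epsilon/3$. The main obstacle I anticipate is the interplay in the middle term between two distinct limiting mechanisms: weak$^\ast$ convergence of the measures requires a single fixed continuous test function, yet the integrand $\Phi_{n,N}$ itself depends on $n$. The key is therefore to decouple them, first replacing $\Phi_{n,N}$ by the fixed continuous function $\Phi_N$ via the uniform estimate $\|\Phi_{n,N}-\Phi_N\|_\infty\to0$, and only afterwards invoking Proposition~\ref{pro1}.
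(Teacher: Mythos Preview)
Your proposal is correct and follows essentially the same argument as the paper: the same truncation $\Phi_{n,N},\Phi_N$ (the paper calls them $h_{n,N},h_N$), the same three-term decomposition, the same use of Lemma~\ref{lem01} to make the outer terms uniformly small in $n$, and the same handling of the middle term via uniform convergence $\Phi_{n,N}\to\Phi_N$ combined with Proposition~\ref{pro1}. Your write-up is in fact slightly more explicit than the paper's in justifying why $\|\Phi_{n,N}-\Phi_N\|_\infty\to 0$ and in decoupling the $n$-dependence of the integrand from the weak$^\ast$ convergence of the measures.
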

\begin{proof}
We define
$$
h(\xi)=\int_0^{\tau(\xi)}\varphi(X(\xi,t))~dt,\quad h_n(\xi)=\int_0^{\tau_n(\xi)}\varphi(X_n(\xi,t))~dt,
$$
and using the same notations of the proof of~Lemma~\ref{lem01}, we also define for  $N\ge 1$
$$
h_N(\xi)=\int_0^{\tau_N(\xi)}\varphi(X(\xi,t))~dt,\quad 
h_{n,N}(\xi)=\int_0^{\tau_{n,N}(\xi)}\varphi(X_n(\xi,t))~dt.
$$
The difference,
$$
\displaystyle\left|\int\int_0^{\tau_n(\xi)}\varphi(X_n(\xi,t))dtd\tilde{\mu}_n(\xi)-\int\int_0^{\tau(\xi)}\varphi(X(\xi,t))dtd\tilde{\mu}(\xi)\right|,
$$
is bounded by
$$
\left|\int h_nd\tilde{\mu}_n-\int h_{n,N}d\tilde{\mu}_n\right|+\left|\int h_{n,N} d\tilde{\mu}_n-\int h_N d\tilde{\mu}\right|+\left|\int h_N d\tilde{\mu}-\int h d \tilde{\mu}\right|.
$$
The first  term is bounded by 
$$\|\varphi\|_\infty\int(\tau_n-\tau_{n,N})d\tilde{\mu}_n\leq \|\varphi\|_\infty\int(\tau_n-\tau_{n,N})^+d\tilde{\mu}_n.$$ 
As we saw in the proof of Lemma \ref{lem01}, this last term is small for large enough $N$ (uniformly in $n$), and a similar conclusion holds for the third term. It is easily seen that $h_{n,N}$'s are continuous and poitwise convergence to $h_N$. The second term also follows as in the proof of Lemma \ref{lem01}, because the functions $h_{n,N}$ are continuous and converge uniformly to $h_N$ when $n\to\infty$.
\end{proof}

\end{document}